\newtheorem{theorem}{Theorem}
\newtheorem{proposition}[theorem]{Proposition}
\newtheorem{coro}[theorem]{Corollary}
\def\acknowledgment{\par\addvspace{17pt}\small\rmfamily
\trivlist\if!\ackname!\item[]\else
\item[\hskip\labelsep
{\bfseries\ackname}]\fi}
\def\C{\mathbb{C}}
\def\R{\mathbb{R}}
\def\N{\mathbb{N}}
\newcommand{\du}{\mathrm{d}}
\begin{document}
\title[Local approximation of non-holomorphic discs in almost complex manifolds]{Local approximation of non-holomorphic discs in almost complex manifolds} 
\author{Florian Bertrand and Uro\v s Kuzman }

\begin{abstract}
We provide a local approximation result of 
non-holomorphic discs with small $\bar{\partial}$ by pseudoholomorphic ones. As an application, we provide a certain gluing construction.
\end{abstract} 

\subjclass[2010]{32Q65, 53C15, 32E30}

\maketitle

\section*{Introduction}
In \cite{Rosay2}, J.-P. Rosay stated the following problem for complex manifolds: can a smooth non-holomorphic disc $\varphi$ with a small  $\overline{\partial}\varphi$ always be approximated by a holomorphic one? The question is very general and, in fact, his paper itself contains a counterexample in a compact Riemann surface of genus $\geq 2$ (due to L. Lempert). However, under certain restrictions on the initial disc $\varphi$ the answer turned out to be positive.

In this paper we address the same question but for the case of non-integrable structures. In particular, we give sufficient conditions for such an approximation result to be valid locally in $(\mathbb{R}^{2n},J)$ (Theorem \ref{theoap}). We stress that, in contrast with the integrable case, a certain uniform bound is imposed on the $L^p$-norm of the differential $d\varphi$. The proof is based on the implicit function theorem for the linearization of the $\overline{\partial}_J$ operator and a careful study of the existence of a bounded right inverse (Theorem \ref{inverse}).  

Finally, motivated by \cite{mcdu-sa1,mcdu-sa2} we present in the last section an application of the above result. More precisely, we glue together two $J$-holomorphic halves of an unit disc in order to obtain one holomorphic object.  

\section{Preliminaries}
Throughout the paper we denote by $\Delta$  the open unit disc in $\C$. 
\subsection{Almost complex manifolds and pseudoholomorphic discs}
Let $M$ be a real smooth manifold $M$. An {\it almost complex structure} $J$ on $M$ is a $\left(1,1\right)$ tensor field
satisfying $J^{2}=-Id$. The pair $\left(M,J\right)$ is called an {\it almost complex manifold}. Let $J_{st}$ be the standard structure on $\R^{2n}$, that is, $(\mathbb{R}^{2n},J_{st})\cong \mathbb{C}^n$. A differentiable map $u\colon\left(M',J'\right) \longrightarrow \left(M,J\right)$ between two almost complex manifolds 
is {\it $\left(J',J\right)$-holomorphic} if it satisfies 
$$J\left(u\left(q\right)\right)\circ d_{q}u=d_{q}u\circ J'\left(q\right),$$ 
for every $q \in M'$, where $d_{q}u$ denotes the differential map of $u$ at $q$. When $M'$ is the unit disc $\Delta$, then such a map $u: \Delta \to (M,J)$ is called a {\it $J$-holomorphic disc}. Equivalently, $u$ is a $J$-holomorphic disc whenever the following non-linear operator vanishes  
$$\overline{\partial}_J u(v)=\frac{1}{2}\left(du(v)+J(u)du(J_{st} v)\right)=0.$$

 \subsection{The local equation}

Suppose that $J$ is a smooth almost complex structure defined in an open set $U\subseteq\R^{2n}$. Then it may be represented by a $\mathbb{R}$-linear operator $J(z)\colon \R^{2n} \to \mathbb{R}^{2n}$
satisfying $J(z)^2=-Id$. Further, the $J$-holomorphy equation for a $J$-holomorphic disc 
$u\colon \Delta \rightarrow U \subseteq \R^{2n}$ can be written as 
\begin{equation*}
\frac{\partial u}{\partial y}-J\left(u\right)\frac{\partial u}{\partial x}=0.
\end{equation*}
Moreover, we can rewrite it in its complex form
\begin{equation}\label{A}
\displaystyle u_{\bar{\zeta}}+A(u)\overline{u_{\zeta}}=0,
\end{equation}
where $\zeta=x+iy \in \C$ and 
$$A(z)(v)=(J_{st}+J(z))^{-1}(J(z)-J_{st})(\bar{v})$$
is a complex linear endomorphism for every $z\in U$ and $v \in \C^n$. Hence 
$A$ can be considered as a $n\times n$ complex matrix of the same regularity as $J(z)$ acting on $v\in\C^n$. We call $A$ the \emph{complex matrix of} $J$.  

Note that the above complex form (\ref{A}) is valid only when $J(z)+J_{st}$ is invertible. This, in particular, can be achieved locally by a change of coordinates in a neighborhood of any given point \cite[Lemma 1]{ga-su} or in a neighborhood of $u(\overline{\Delta})$ where $u\colon \overline{\Delta}\to \mathbb{R}^{2n}$ is an embedded $J$-holomorphic disc (see the Appendix in \cite{IR}); or globally, when $J$ is tamed by the standard symplectic form $\omega_{st}$ \cite{au-la} (see also \cite[Proposition 2.8]{ST2}). We denote by $\mathcal{J}$ the set of all smooth structures on $\mathbb{R}^{2n}$ satisfying such a condition and remark that it is in a one-to-one correspondence with the set of complex matrices $A$ satisfying the condition $\det(I-A\bar{A})\neq 0$ (see \cite{ST1}).

\subsection{Sobolev spaces and the Cauchy-Green operator}

Let $p>2$ and $k\in\N$. Let $\Omega\subset \mathbb{C}$ be bounded. We denote by $L^p(\Omega)$ the classical Lebesgue space and by $W^{k,p}(\Omega)$ the Sobolev space of maps $u\colon \Omega \to \mathbb{C}^n$ whose derivatives  up to order $k$ are in $L^p(\Omega)$. We sometimes abbreviate this to $L^p$ and $W^{1,p}$ if $\Omega$ is clear from the context.  The space $W^{k,p}(\Omega)$ is endowed with the usual norm 
$$\|u\|_{W^{k,p}(\Omega)}=\sum_{j=0}^k\|D^ju\|_{L^p(\Omega)}.$$
For $0<\alpha<1$, we denote by $\mathcal{C}^{k,\alpha}(\overline{\Omega})$ the H\"older space equipped with the norm
$$\|u\|_{\mathcal{C}^{k,\alpha}(\overline{\Omega})}=\sum_{j=0}^{k}\|D^ju\|_{L^\infty(\Omega)}+
\underset{\zeta\neq \eta}{\mathrm{sup}}\frac{\|D^ku(\zeta)-D^ku(\eta)\|}{|\zeta-\eta|^\alpha},$$
where $\|D^ju\|_{L^\infty(\Omega)}=\sup_{\zeta\in \Omega}\|D^ju(\zeta)\|$. Since $p>2$, the classical Sobolev embedding theorem 
ensures the existence of a positive constant $c>0$ such that for $u\in W^{1,p}(\Omega)$ we have 
$$\|u\|_{L^\infty(\Omega)}\leq \|u\|_{\mathcal{C}^{0,1-2/p}(\Omega)}\leq c \|u\|_{W^{1,p}(\Omega)}.$$
In particular, maps $u\in W^{1,p}(\Omega)$ have a bounded image $u(\Omega)\subset \C^n$.

In order to study Equation (\ref{A}) the main analytic tool is the {\it Cauchy-Green operator}
\begin{eqnarray*} T(u)(z)=\frac{1}{\pi}\int_{\Delta}\frac{u(\zeta)}{z-\zeta}\, \du x \, \du y, \end{eqnarray*}
defined for $u\colon\Delta \to \C^n$. We will need the following properties \cite{VEKUA}. 
 \begin{proposition}\label{propT} Let $p>2$.
 \begin{enumerate}[i.]
  \item The operator $T\colon L^{p}(\Delta) \to W^{1,p}(\Delta)$ is bounded and  $T\colon L^{p}(\Delta) 
  \to L^{\infty}(\Delta)$ is compact.
 \item  The Cauchy-Green operator $T$ solves the usual $\bar{\partial}$-equation, that is, for $f \in L^{p}(\Delta)$ we have 
  $(T(f))_{\bar{\zeta}}=f$ where the derivative is in the sense of Sobolev.
  \end{enumerate}
 \end{proposition}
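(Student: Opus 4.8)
The statement collects classical mapping properties of the Cauchy--Green transform; I would prove them in four steps, only the last of which uses anything beyond elementary estimates.

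First I would treat the boundedness $T\colon L^p(\Delta)\to L^\infty(\Delta)$. Let $p'=p/(p-1)$; since $p>2$ one has $p'<2$, so for every $z\in\overline{\Delta}$ the function $\zeta\mapsto|z-\zeta|^{-1}$ lies in $L^{p'}(\Delta)$ with a norm bounded independently of $z$ (estimate it by $\int_{|\zeta|\le 2}|\zeta|^{-p'}\,\du x\,\du y<\infty$). H\"older's inequality then gives $\|T(f)\|_{L^\infty(\Delta)}\le c\,\|f\|_{L^p(\Delta)}$. Moreover, writing $\frac{1}{z-\zeta}-\frac{1}{w-\zeta}=\frac{w-z}{(z-\zeta)(w-\zeta)}$ and splitting $\Delta$ according to whether $|\zeta-z|$ is smaller or larger than $|z-w|$, a second application of H\"older's inequality yields the equicontinuity estimate $\|T(f)(z)-T(f)(w)\|\le c\,|z-w|^{1-2/p}\,\|f\|_{L^p(\Delta)}$. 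Hence $T$ sends the closed unit ball of $L^p(\Delta)$ into a bounded and uniformly H\"older --- in particular equicontinuous --- subset of $\mathcal C^0(\overline{\Delta})$, and the Arzel\`a--Ascoli theorem gives the compactness of $T\colon L^p(\Delta)\to L^\infty(\Delta)$. (This argument also records that $T$ maps into $\mathcal C^{0,1-2/p}(\overline{\Delta})$.)

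Next I would prove (ii). The point is that $\frac{1}{\pi\zeta}$ is a fundamental solution of $\partial/\partial\bar\zeta$, that is, $\left(\frac{1}{\pi\zeta}\right)_{\bar\zeta}=\delta_0$ in $\mathcal D'(\C)$. For $f\in\mathcal C^\infty_c(\Delta)$ this is checked by a direct computation --- apply Stokes' formula on $\Delta$ with a disc of radius $\eps$ about the pole removed and let $\eps\to0$ --- and it gives $(T(f))_{\bar\zeta}=f$ pointwise. For general $f\in L^p(\Delta)$, choose $f_n\in\mathcal C^\infty_c(\Delta)$ with $f_n\to f$ in $L^p(\Delta)$. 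By the first step $T(f_n)\to T(f)$ in $L^\infty(\Delta)$, hence in $\mathcal D'(\Delta)$, so $(T(f_n))_{\bar\zeta}\to (T(f))_{\bar\zeta}$ in $\mathcal D'(\Delta)$; on the other hand $(T(f_n))_{\bar\zeta}=f_n\to f$ in $L^p(\Delta)\subset\mathcal D'(\Delta)$. Therefore $(T(f))_{\bar\zeta}=f$, and since the right-hand side belongs to $L^p$ this is precisely the Sobolev identity claimed in (ii).

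Finally, for the boundedness $T\colon L^p(\Delta)\to W^{1,p}(\Delta)$ it remains to estimate the other first derivative, since $\|(T(f))_{\bar\zeta}\|_{L^p(\Delta)}=\|f\|_{L^p(\Delta)}$ by (ii) and $\|T(f)\|_{L^p(\Delta)}\le|\Delta|^{1/p}\|T(f)\|_{L^\infty(\Delta)}$ by the first step. Differentiating under the integral sign, $(T(f))_\zeta$ is represented at each point of $\Delta$ by the principal-value singular integral obtained from the definition of $T$ on replacing the kernel $(z-\zeta)^{-1}$ with $-(z-\zeta)^{-2}$; this is the Beurling--Ahlfors transform of the zero-extension of $f$ to $\C$. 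Being a Calder\'on--Zygmund operator, it is bounded on $L^p(\C)$ for every $1<p<\infty$, which gives $\|(T(f))_\zeta\|_{L^p(\Delta)}\le c\,\|f\|_{L^p(\Delta)}$; combining the three bounds yields $\|T(f)\|_{W^{1,p}(\Delta)}\le c\,\|f\|_{L^p(\Delta)}$. I expect this last step --- the $L^p$-boundedness of the Beurling--Ahlfors transform --- to be the main obstacle: it is the only genuinely non-elementary ingredient, everything else being soft analysis. (Alternatively one may of course simply invoke \cite{VEKUA} for the whole statement.)
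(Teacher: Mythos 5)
Your argument is correct, but it is worth noting that the paper itself offers no proof of this proposition at all: it simply quotes these facts from Vekua's book \cite{VEKUA}, where they appear as the classical estimates for the operators $T$ and $\Pi=\partial_z\circ T$. What you have written is essentially a self-contained reconstruction of that classical material, and each step holds up: the uniform $L^{p'}$-bound on the kernel with $p'=p/(p-1)<2$ gives $L^p\to L^\infty$ boundedness, the kernel-difference estimate gives the $\mathcal{C}^{0,1-2/p}$ modulus of continuity and hence compactness into $L^\infty$ via Arzel\`a--Ascoli, the fundamental-solution computation plus density gives (ii), and the $L^p$-boundedness of the Beurling--Ahlfors transform (Calder\'on--Zygmund theory) is indeed the one non-elementary ingredient needed for $T\colon L^p\to W^{1,p}$. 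The only place where your wording is looser than the argument requires is ``differentiating under the integral sign'' for $(T(f))_\zeta$: the singularity $|z-\zeta|^{-2}$ is not integrable, so the identification of the distributional $\zeta$-derivative with the principal-value integral should be justified exactly as you justified (ii) --- verify it pointwise for $f\in\mathcal{C}^\infty_c(\Delta)$ via a Cauchy--Pompeiu type computation and then pass to general $f\in L^p$ by density, using the $L^p$-boundedness of the Beurling--Ahlfors transform on one side and the continuity of $T$ into $L^1_{loc}$ on the other. With that small repair made explicit, your proof is complete and is, in substance, the proof found in \cite{VEKUA}.
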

 
\section{The bounded right inverse} 
Let us now turn for a moment to the approximation problem raised in the introduction. It is classical that it can be solved using the Cauchy-Green operator in the standard case $(\mathbb{R}^{2n},J_{st})\cong\mathbb{C}^n.$ Indeed, let $\varphi\colon\Delta\to\mathbb{C}^n$ be such that $\left\|\varphi_{\bar{\zeta}}\right\|_{L^p(\Delta)}<\delta$. Then its holomorphic approximation is given by 
$$u=\varphi-T(\varphi_{\bar{\zeta}}).$$
Note that by $i.$ Proposition \ref{propT} we have $\left\|u-\varphi\right\|_{W^{1,p}(\Delta)}<c\delta$, where the constant $c>0$ depends only on $p>2$. Thus, due to the Sobolev embedding theorem $u$ is also $\mathcal{C}^{0,1-2/p}(\overline{\Delta})$-close to the non-holomorphic map $\varphi$.
 
Essentially, the Cauchy-Green operator is a bounded right inverse of the usual $\bar{\partial}$ operator. Hence, one can find an appropriate small correction and add it to the initial disc. We will mimic this idea but for the linearization of Equation (\ref{A}). 

\begin{theorem}\label{inverse}
Let $J\in \mathcal{J}$ and let $A$ be its complex matrix. Let $\mathcal{F}:W^{1,p}(\Delta) \to L^p(\Delta)$ be the non-linear operator given by   
\begin{equation}\label{operator F}\mathcal{F}(u)=\displaystyle u_{\bar{\zeta}}+A(u)\overline{u_{\zeta}}.\end{equation}
Then for every $\varphi\in W^{1,p}(\Delta)$ the Fr\'echet derivative $d_\varphi\mathcal{F}$ admits a bounded right inverse $Q_\varphi$.
\end{theorem}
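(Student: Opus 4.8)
The plan is to compute the Fréchet derivative $d_\varphi\mathcal{F}$ explicitly, recognize it as a compact perturbation of the $\bar\partial$ operator, and then build a right inverse by combining the Cauchy–Green operator $T$ with a Neumann-series / Fredholm argument. Differentiating \eqref{operator F} at $\varphi$ in the direction $v\in W^{1,p}(\Delta)$ gives
\begin{equation*}
d_\varphi\mathcal{F}(v)=v_{\bar\zeta}+A(\varphi)\,\overline{v_\zeta}+\big(dA(\varphi)v\big)\overline{\varphi_\zeta}.
\end{equation*}
The first two terms constitute the ``principal part'' $L(v):=v_{\bar\zeta}+A(\varphi)\overline{v_\zeta}$, while the last term is a zeroth-order term in $v$: the map $v\mapsto (dA(\varphi)v)\overline{\varphi_\zeta}$ factors through the inclusion $W^{1,p}(\Delta)\hookrightarrow L^\infty(\Delta)$, which is compact by the Sobolev embedding, hence defines a compact operator $W^{1,p}(\Delta)\to L^p(\Delta)$ (here one uses $\overline{\varphi_\zeta}\in L^p$ and $dA(\varphi)$ bounded, since $\varphi(\overline\Delta)$ is a bounded set on which $A$ is smooth).

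First I would construct a right inverse for the principal part $L$. Using the ansatz $v=T(h)$ for $h\in L^p(\Delta)$, property \emph{ii.} of Proposition \ref{propT} gives $v_{\bar\zeta}=h$, so that $L(T(h))=h+A(\varphi)\overline{(T(h))_\zeta}$. Write $S:=\Pi$ for the (Beurling-type) operator $h\mapsto \overline{(T(h))_\zeta}$; one must check $S$ is bounded on $L^p(\Delta)$, which follows from boundedness of $T\colon L^p\to W^{1,p}$ together with the boundedness of $\zeta$-differentiation $W^{1,p}\to L^p$ on this fixed domain. Thus solving $L(T(h))=f$ amounts to solving $(\mathrm{Id}+A(\varphi)S)(h)=f$ in $L^p(\Delta)$. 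If $\|A(\varphi)\|_{L^\infty}$ were small we could invert by a Neumann series, but in general it need not be; here is where the condition $J\in\mathcal{J}$, i.e. $\det(I-A\bar A)\neq 0$ pointwise, enters. The operator $\mathrm{Id}+A(\varphi)S$ is Fredholm of index zero (again $S$ has enough smoothing to make the relevant commutator/remainder compact), and the pointwise ellipticity condition $\det(I-A\bar A)\neq 0$ is exactly what rules out a kernel: a nontrivial solution of $(\mathrm{Id}+A(\varphi)S)h=0$ would produce a $W^{1,p}$ solution of $L(v)=0$, and by the standard similarity-principle / elliptic-regularity argument for the operator $\bar\partial+A(\varphi)\overline{\partial(\cdot)}$ such solutions are locally constant-like, forcing $h=0$ once one accounts for the matrix structure. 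Hence $(\mathrm{Id}+A(\varphi)S)^{-1}$ exists and is bounded, and $R_\varphi:=T\circ(\mathrm{Id}+A(\varphi)S)^{-1}$ is a bounded right inverse of $L$.

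Finally I would promote $R_\varphi$ to a right inverse of the full operator $d_\varphi\mathcal{F}=L+K$, where $K(v)=(dA(\varphi)v)\overline{\varphi_\zeta}$ is compact. Seeking $Q_\varphi$ of the form $Q_\varphi=R_\varphi\circ B$ with $B\colon L^p(\Delta)\to L^p(\Delta)$ bounded, the requirement $d_\varphi\mathcal{F}(Q_\varphi f)=f$ becomes $(\mathrm{Id}+K R_\varphi)(B f)=f$, so it suffices to invert $\mathrm{Id}+KR_\varphi$ on $L^p(\Delta)$. Since $K$ is compact and $R_\varphi$ is bounded, $KR_\varphi$ is compact, so $\mathrm{Id}+KR_\varphi$ is Fredholm of index zero; thus it is invertible as soon as it is injective. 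Injectivity is equivalent to injectivity of $d_\varphi\mathcal{F}$ on the range of $R_\varphi$, and more conveniently follows from the fact that $d_\varphi\mathcal{F}$, being an elliptic first-order operator with the same symbol as $L$, has trivial kernel of the relevant type by the same similarity-principle argument applied to the perturbed operator (a zeroth-order perturbation does not destroy the unique-continuation property). Setting $Q_\varphi=R_\varphi\circ(\mathrm{Id}+KR_\varphi)^{-1}$ then gives the desired bounded right inverse.

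The main obstacle I expect is the middle step: showing that $\mathrm{Id}+A(\varphi)S$ is invertible on $L^p(\Delta)$ without a smallness hypothesis on $A$. This is precisely where the structural assumption $\det(I-A\bar A)\neq 0$ defining $\mathcal{J}$ must be exploited, via a similarity-principle/unique-continuation argument adapted to the vector-valued, non-integrable setting; making that argument rigorous in $W^{1,p}$ with $p>2$, rather than in the more classical $\mathcal{C}^{1,\alpha}$ framework, is the technical heart of the proof.
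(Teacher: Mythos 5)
Your computation of $d_\varphi\mathcal{F}$ and the remark that the zeroth-order term $K(v)=(dA(\varphi)v)\overline{\varphi_\zeta}$ is compact are fine, but the two load-bearing steps of your argument have genuine gaps. First, your treatment of the principal part $L(v)=v_{\bar\zeta}+A(\varphi)\overline{v_\zeta}$ through the Beurling-type operator $S(h)=\overline{(T(h))_\zeta}$ is not justified: $S$ is a Calder\'on--Zygmund singular integral operator, bounded on $L^p$ but with no smoothing at all, so $A(\varphi)S$ is not compact and $\mathrm{Id}+A(\varphi)S$ is not a compact perturbation of the identity; its invertibility on $L^p(\Delta)$ for general $p>2$ and non-small $A$ is precisely the hard point (already in the scalar Beltrami case $\mathrm{Id}-\mu S$ fails to be invertible on $L^p$ when $p$ is too large relative to $\|\mu\|_{L^\infty}$), and the condition $\det(I-A\bar A)\neq 0$ alone does not deliver it. You flag this as the main obstacle, but the similarity-principle fix you sketch does not close it. The paper sidesteps the Beurling transform entirely: in the special case $A(\varphi)\equiv 0$ the linearization has no $\overline{h_\zeta}$-term, and in the general case one pre-composes with the real-linear substitution $N(u)=u+A(\varphi)\bar u$ (following Sukhov--Tumanov), writes $\mathcal{F}=F\circ N$, and checks that at $\psi=N(\varphi)$ the coefficients satisfy $K(\psi)=I$, $K_0(\psi)=0$, so that $d_\psi F(h)=h_{\bar\zeta}+B_1^\psi h+B_2^\psi\bar h$ with $B_i^\psi\in L^p$; the associated integral operator $\Phi(h)=h+T(B_1^\psi h+B_2^\psi\bar h)$ is then a genuine compact perturbation of the identity.

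Second, both of your appeals to ``index zero plus injectivity'' fail. The kernel of $d_\varphi\mathcal{F}$ on $W^{1,p}(\Delta)$ with no boundary conditions is infinite dimensional (the paper stresses that on the disc this operator is never Fredholm), so unique continuation cannot show that $\mathrm{Id}+KR_\varphi$ is injective: $(\mathrm{Id}+KR_\varphi)g=0$ only says that $R_\varphi g$ lies in that large kernel, which is perfectly possible. Likewise, the index-zero integral operator can genuinely have a nontrivial kernel for $n\geq 2$: the example in Section 2 of the paper exhibits a $J$-holomorphic $\varphi$ with $\dim_{\R}\ker d_\varphi\mathcal{G}=2$. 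The paper's resolution is not to prove injectivity but to add a small linear operator $L$ with holomorphic values, $L(h)_{\bar\zeta}=0$, provided by \cite[Theorem 3.1]{ST1}, so that $\tilde\Phi=\Phi+L$ is invertible; since the correction is killed by $\partial_{\bar\zeta}$, one still gets $d_\varphi\mathcal{F}(\tilde\Phi^{-1}\circ T(h))=h$, hence the bounded right inverse $Q=\tilde\Phi^{-1}\circ T$, and $Q_\varphi=N^{-1}\circ Q_{N(\varphi)}$ in the general case. These two devices --- the substitution $N$ eliminating the first-order term, and the holomorphic finite perturbation replacing any injectivity claim --- are the ideas missing from your proposal.
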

\begin{proof}
Let us start with a special case and assume that $\varphi\colon \Delta \to \C^n$ satisfies $A(\varphi)=0$, that is, the $J=J_{st}$ along the image $\varphi(\Delta)$. Then, the linearization of $\mathcal{F}$ at $\varphi$ is given by 
\begin{equation}\label{eqlin}
d_\varphi\mathcal{F}(h)=h_{\bar{\zeta}}+B_1^\varphi h + B_2^\varphi\bar{h},
\end{equation}
where $h\colon\Delta \to \C^n$ and $B_1^\varphi$ and $B_2^\varphi$ are two $n\times n$ matrix functions defined on 
$\Delta$ arising from the derivatives of $A$ and $\varphi$. Note that it is important that $\varphi \in W^{1,p}(\Delta)$ and that therefore $\varphi(\Delta)$ is relatively compact. Thus, $B_1^\varphi$ and $B_2^\varphi$ have rows and columns of class $L^p(\Delta)$. 

For $n=1$ a rather complete theory of such systems was given by I.N. Vekua \cite{VEKUA}. For $n\geq2$ the reader is referred to \cite{PASCALI,bo}. Using the Cauchy-Green operator we obtain an integral version of the above operator 
\begin{equation*}
\Phi_\varphi(h)=h+T(B_1^\varphi h + B_2^\varphi\bar{h}).
\end{equation*}
By $i.$ Proposition \ref{propT} the map $h\mapsto T(B_1^\varphi h + B_2^\varphi\bar{h})$ is compact. Hence $\Phi_\varphi$ is a Fredholm operator mapping the space $W^{1,p}(\Delta)$ to itself. Moreover, its Fredholm index equals to zero. Thus it is onto if and only of $\dim \textrm{Ker}(\Phi_\varphi)=0$. This, in particular is always true for $n=1$, but not for for $n\geq 2$. Still, a small linear and holomorphic part $L$ can be added in order to obtain invertibility. That is, there exists a linear operator $L\colon W^{1,p}(\Delta)\to W^{1,p}(\Delta)$  such that $\tilde{\Phi}_\varphi=\Phi_\varphi+L$ is invertible, $L(h)_{\bar{\zeta}}=0$ and $(\tilde{\Phi}_\varphi(h))_{\bar{\zeta}}=0$ if and only if $d_u\mathcal{F}(h)=0$ (see \cite[Theorem 3.1.]{ST1} for details). Moreover, we have
$$\Phi_\varphi\left(\tilde{\Phi}_\varphi^{-1} \circ T (h)\right)+L(\tilde{\Phi}_\varphi^{-1} \circ T (h))=T (h).$$ 
Since the term $L(\tilde{\Phi}_\varphi^{-1} \circ T (h))$ is holomorphic, differentiating the previous in $\overline{\zeta}$ 
leads to 
$$d_\varphi\mathcal{F}(\tilde{\Phi}_\varphi^{-1} \circ T (h))=h.$$ 
Thus the operator $Q_\varphi=\tilde{\Phi}_\varphi^{-1} \circ T: L^p(\Delta) \to W^{1,p}(\Delta)$ is a 
bounded right inverse of the operator $d_\varphi\mathcal{F}$.  

Assume now that $A(\varphi)$ does not vanish identically. Following \cite{ST1} p.9, we introduce a substitution by real linear transformation $N\colon W^{1,p}(\Delta)\to W^{1,p}(\Delta)$:
$$N(u)=u+A(\varphi)\overline{u}.$$
Note that it is well defined and continuous since $\varphi(\Delta)$ is relatively compact and $\varphi\in W^{1,p}(\Delta)$. Moreover, it is  one-to-one and one can check that 
$$N^{-1}(v)=B(v-A(\varphi)\bar v).$$
where $$B=\left(I-A(\varphi)\overline{A(\varphi)}\right)^{-1}.$$ 
We write $\mathcal{F}=F\circ N$, with  
$$F(v)=K(v)v_{\bar\zeta}+K_0(v)\overline{v_\zeta}+K_1(v) v+K_2(v)\bar v,$$
where
$$
\left\{
\begin{array}{lll} 
K(v)&=&B-A(N^{-1}(v))\overline{BA(\varphi)}\\
\\
K_0(v)&=&-BA(\varphi)+A(N^{-1}(v))\overline{B}\\ 
\\
K_1(v)&=&B_{\overline{\zeta}}-A(N^{-1}(v))\overline{B_{\zeta}} \ \overline{A(\varphi)}-
A(N^{-1}(v))\overline{B} \ \overline{A(\varphi)_{\zeta}}\\
\\
K_2(v)&=&-B_{\overline{\zeta}}A(\varphi)-BA(\varphi)_{\overline{\zeta}}+A(N^{-1}(v))\overline{B_{\zeta}}.
\end{array}
\right.$$
The key property of the transformation $N$ is that if we set
$$\psi=N(\varphi)=\varphi+A(\varphi)\overline{\varphi},$$
then we have $K(\psi)=I$ and $K_0(\psi)=0$. Hence the differential at $\psi$ of the new map $F\colon W^{1,p}(\Delta) \to L^p(\Delta)$ is again of the form
$$d_\psi F(h)=h_{\bar\zeta}+B^\psi_1 h+B^\psi_2 \bar h$$
with the matrix functions $B^\psi_1$ and $B^\psi_2$ whose rows and columns are of the class $L^{p}(\Delta)$. The existence of a bounded right inverse $Q_{N(\varphi)}$ for such an operator was already proved above. Hence $Q_\varphi=N^{-1}\circ Q_{N(\varphi)}$
is a bounded right inverse of $d_{\varphi}\mathcal{F}$. 
\end{proof}

For $J$-holomorphic discs the above theorem generalizes importantly to the case of almost complex manifolds. Indeed, note that under the assumptions of Theorem \ref{inverse} the differential of $\overline{\partial}_J$ at an embedded $J$-holomorphic disc admits a bounded right inverse in the Euclidean space $(\R^{2n},J)$. More generally, let $M$ be an almost complex manifold $M$ and $\varphi$ an embedded $J$-holomorphic disc of class $W^{1,p}$. By \cite{IR} one can choose coordinates around $\varphi(\Delta)$ such that $J(\varphi)=J_{st}$. Hence, the operator $D_\varphi$ defined in 3.1 \cite{mcdu-sa2} p. 38 admits a bounded right inverse. The proof is essentially the same as the one above for $A(\varphi)=0$. However, there are two main differences between the operator $D_\varphi$ in our (boundary free) case and the one defined for compact curves in \cite{mcdu-sa2}. Firstly, in our case $D_\varphi$ is always onto (even for non-holomorphic $W^{1,p}$-class maps represented locally in a chart with $J\in\mathcal{J}$). Secondly, for discs, the operator $D_\varphi$ is never Fredholm unless we impose additional totally real boundary conditions and reduce its kernel to a finite dimension. Nevertheless, as shown below we can always associate a Fredholm integral form to the non-linear operator.

Let $\mathcal{F}$ be the operator defined in (\ref{operator F}). Using the Cauchy-Green operator we associate to it the operator $
\mathcal{G}\colon W^{1,p}(\Delta)\to W^{1,p}(\Delta)$ given by
\begin{equation}\label{integral form}
\mathcal{G}(u)=u+T(A(u)\overline{u\zeta}).
\end{equation}
Since $J$ is smooth, $\mathcal{G}$ depends smoothly on $u\in W^{1,p}(\Delta)$. Moreover, it follows from the proof above that given 
$\varphi\in W^{1,p}(\Delta)$ the derivative $d_\varphi\mathcal{G}$ is an index zero Fredholm map but with possibly non-trivial kernel (see the example below). Note that the last does not object the existence of the right inverse $Q_\varphi$, but in general its continuous 
dependence on $\varphi\in W^{1,p}(\Delta)$ may be questionable. It will be important for us to omit such cases in Section $4$ where 
a bound on the norm of  $Q_\varphi$ is needed. Hence, we introduce some additional terminology; one should note that what follows 
is not needed for the main result presented in the next section. 

We say that the pair $(\varphi,J)\in W^{1,p}(\Delta)\times\mathcal{J}$ is \textit{regular} if the map $d_\varphi\mathcal{G}$ is onto. We 
say that $J$ is \textit{regular on} $\mathcal{W}\subset W^{1,p}(\Delta)$ if $(\varphi,J)$ is regular for every $\varphi\in\mathcal{W}$. 
We denote by $\mathcal{J}_{reg}(\mathcal{W})$ the set of all such structures. From the above discussion one can deduce the 
following statement.  
\begin{coro}\label{continuous}
Let $\mathcal{W}\subset W^{1,p}(\Delta)$ and $J\in\mathcal{J}_{reg}(\mathcal{W})$. Then the derivative $d_\varphi \mathcal{F}$ of (\ref{operator F}) admits a bounded right inverse whose norm $\left\|Q_\varphi\right\|$ depends continuously on $\varphi\in\mathcal{W}$.   
\end{coro}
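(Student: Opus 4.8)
The plan is to reduce everything to the structure of the right inverse constructed in the proof of Theorem \ref{inverse}, and to track how each ingredient depends on $\varphi$. Recall from that proof that, after the real-linear substitution $N$ (which depends continuously on $\varphi$ since $\varphi(\Delta)$ is relatively compact and $A$ is smooth), the bounded right inverse has the explicit form $Q_\varphi = N^{-1}\circ \tilde\Phi_{N(\varphi)}^{-1}\circ T$, where $T$ is the fixed Cauchy--Green operator, $\Phi_\psi(h)=h+T(B_1^\psi h + B_2^\psi\bar h)$ is the integral (Fredholm) form of the linearized operator, and $\tilde\Phi_\psi=\Phi_\psi+L$ with $L$ a small holomorphic correction chosen to kill the kernel. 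Operator-norm estimates on a product and a sum of bounded operators are controlled by the norms of the factors, so it suffices to prove that $\varphi\mapsto N^{-1}$, $\varphi\mapsto \Phi_{N(\varphi)}$, and $\varphi\mapsto L$ are continuous as maps into $\mathcal B(W^{1,p}(\Delta))$ on $\mathcal W$, together with the observation that inversion $S\mapsto S^{-1}$ is continuous on the open set of invertible operators.

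Concretely, I would proceed as follows. First, since $J$ (hence $A$) is smooth and points of $W^{1,p}(\Delta)$ have relatively compact image, the matrix functions $B_1^\psi, B_2^\psi$ (built from $A$, its derivatives, and the derivatives of $\psi$) depend continuously on $\psi$ in the appropriate $L^p$-sense; composing with the continuous map $\varphi\mapsto\psi=N(\varphi)$ and then with the bounded operator $T$ shows $\varphi\mapsto\Phi_{N(\varphi)}\in\mathcal B(W^{1,p}(\Delta))$ is continuous, and likewise $\varphi\mapsto d_\varphi\mathcal G$. Second, by hypothesis $J\in\mathcal J_{reg}(\mathcal W)$ means $d_\varphi\mathcal G$ is onto for every $\varphi\in\mathcal W$; being an index-zero Fredholm operator, surjectivity forces injectivity, so $d_\varphi\mathcal G=\Phi_{N(\varphi)}$ (up to the conjugation by $N$) is in fact invertible for every $\varphi\in\mathcal W$. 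Consequently no holomorphic correction is needed, i.e.\ one may take $L=0$ and $\tilde\Phi_{N(\varphi)}=\Phi_{N(\varphi)}$, which removes the only genuinely $\varphi$-dependent subtlety in the construction. Third, the set of invertible elements of $\mathcal B(W^{1,p}(\Delta))$ is open and inversion is continuous there (Neumann series); composing, $\varphi\mapsto \Phi_{N(\varphi)}^{-1}$ is continuous on $\mathcal W$. Finally, $Q_\varphi=N^{-1}\circ\Phi_{N(\varphi)}^{-1}\circ T$ is a continuous composition of a continuous family of bounded operators with the fixed bounded operator $T$, so $\varphi\mapsto Q_\varphi\in\mathcal B(L^p(\Delta),W^{1,p}(\Delta))$ is continuous; in particular $\varphi\mapsto\|Q_\varphi\|$ is continuous on $\mathcal W$. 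That it is a right inverse of $d_\varphi\mathcal F$ is exactly Theorem \ref{inverse}.

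The main obstacle is the passage ``surjective $\Rightarrow$ invertible'': one must be careful that $d_\varphi\mathcal G$ really is a zero-index Fredholm operator on $W^{1,p}(\Delta)$ (this is the content of the discussion preceding the corollary, using that $h\mapsto T(B_1^\varphi h+B_2^\varphi\bar h)$ is compact by $i.$ of Proposition \ref{propT}), so that regularity genuinely upgrades to bijectivity and hence, by the open mapping theorem, to bounded invertibility. Once that is in place, the fact that $L$ can be taken to be $0$ on $\mathcal W$ makes the formula $Q_\varphi=N^{-1}\circ\Phi_{N(\varphi)}^{-1}\circ T$ manifestly continuous in $\varphi$, and the remaining steps are the routine continuity-of-composition and continuity-of-inversion facts in $\mathcal B(W^{1,p}(\Delta))$. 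A secondary point requiring a line of care is the continuity of $\varphi\mapsto B_i^{N(\varphi)}$: one expands $B_i$ in terms of $A\circ N^{-1}$, its first derivatives, and the first derivatives of $\varphi$, and uses the smoothness of $A$ on the relatively compact set $\overline{\bigcup_{\varphi\in\mathcal W}\varphi(\Delta)}$ (if $\mathcal W$ is not bounded one works locally, which suffices for continuity) to get the required $L^p$-estimates; multiplication by such $L^p$ matrix functions followed by $T$ lands boundedly in $W^{1,p}(\Delta)$ by H\"older and Proposition \ref{propT}.
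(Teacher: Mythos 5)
Your overall scheme (regularity $\Rightarrow$ invertibility of an index-zero Fredholm integral form, then continuity of inversion) is the right one, but the step you use to connect the regularity hypothesis to the construction of Theorem \ref{inverse} is wrong. You claim that $d_\varphi\mathcal{G}$ coincides with $\Phi_{N(\varphi)}$ ``up to the conjugation by $N$'', so that surjectivity of $d_\varphi\mathcal{G}$ lets you take $L=0$ and invert $\Phi_{N(\varphi)}$. This identification fails: the only relation between the two operators is that both are primitives of the same linearization, namely $\bigl(d_\varphi\mathcal{G}(h)\bigr)_{\bar\zeta}=d_\varphi\mathcal{F}(h)=\bigl(\Phi_{N(\varphi)}(Nh)\bigr)_{\bar\zeta}$, so $d_\varphi\mathcal{G}$ and $\Phi_{N(\varphi)}\circ N$ differ by a linear operator with holomorphic values --- precisely a term of the type $L$, which need not be small or compact. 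Concretely, their kernels are the intersections of $\ker d_\varphi\mathcal{F}$ with two different normalizations, $h=T(h_{\bar\zeta})$ for $d_\varphi\mathcal{G}$ versus $Nh=T\bigl((Nh)_{\bar\zeta}\bigr)$ for $\Phi_{N(\varphi)}\circ N$, and there is no implication between their triviality. Even for a constant nonzero matrix $A$ one has $\Phi_{N(\varphi)}=\mathrm{Id}$ while $d_\varphi\mathcal{G}(h)=h+T(A\overline{h_\zeta})$, so the operators are genuinely different. Hence the regularity hypothesis, which is a statement about $d_\varphi\mathcal{G}$, does not justify discarding the correction $L$ in the construction of $Q_\varphi=N^{-1}\circ\tilde\Phi_{N(\varphi)}^{-1}\circ T$; and the $\varphi$-dependence of $L$ is exactly the ``questionable'' point the definition of regularity was introduced to avoid.

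The repair --- and this is what the paper's discussion intends --- is to bypass $N$, $\Phi$ and $L$ entirely and work with $\mathcal{G}$ of (\ref{integral form}) itself. Regularity says $d_\varphi\mathcal{G}$ is onto; since it is Fredholm of index zero it is bijective, hence boundedly invertible by the open mapping theorem. Because $\bigl(d_\varphi\mathcal{G}(h)\bigr)_{\bar\zeta}=d_\varphi\mathcal{F}(h)$ and $(Tf)_{\bar\zeta}=f$ by Proposition \ref{propT}, the operator $Q_\varphi=(d_\varphi\mathcal{G})^{-1}\circ T$ is a bounded right inverse of $d_\varphi\mathcal{F}$. Since $\mathcal{G}$ depends smoothly on $u\in W^{1,p}(\Delta)$, the map $\varphi\mapsto d_\varphi\mathcal{G}$ is continuous in operator norm, the invertible operators form an open set on which inversion is continuous, and therefore $\varphi\mapsto Q_\varphi$, hence $\varphi\mapsto\|Q_\varphi\|$, is continuous on $\mathcal{W}$ --- these are exactly your third and fourth steps, applied to the correct operator; your estimates on $N$, $B_1^\psi$, $B_2^\psi$ then become unnecessary (they are subsumed in the smooth dependence of $\mathcal{G}$ on $u$).
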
     

Let us further justify the notion of regularity by giving an explicit example motivated by \cite{HABETHA,BUCHANAN}, in which we provide a non-regular pair. 
\vskip 0.1 cm
\noindent\textbf{Example:}  Let $J\in\mathcal{J}$ be an almost complex structure on $\mathbb{R}^{6}$ corresponding to the complex matrix 
$$ A(z_1,z_2,z_3)=\left[\begin{array}{ccc}0 & 0 & 0\\ \displaystyle \frac{6z_1^2z_3}{3-z_1^2\bar{z}_1^2} & 0 & 0\\ -z_2& 0 & 0\end{array}\right]$$
when restricted the unit ball of $\mathbb{R}^6$. Let $\varphi\colon \Delta\to \mathbb{C}^3$ be a $J$-holomorphic disc given by $\varphi(\zeta)=(\zeta,0,0)$.
Then the derivative $d_\varphi\mathcal{G}$ of the operator (\ref{integral form}) at $\varphi$ is equal to
$$d_\varphi\mathcal{G}(h)=h+T\left(\left[\begin{array}{ccc}0 & 0 & 0\\ 0 & 0 & \displaystyle \frac{6\zeta^2}{3-\zeta^2\bar{\zeta}^2}\\ 0&-1&0\end{array}\right]h\right).$$
Hence $h=(h_1,h_2,h_3)\in\ker d_\varphi\mathcal{G}$ implies $h_1=0$ and for $h'=(h_2,h_3)$ we obtain a system  
\[
	h'_{\bar{\zeta}}+ 
	\begin{pmatrix} 
		0 &6\zeta^2/(3-\zeta^2\bar{\zeta}^2) \\ 
		-1&0
	\end{pmatrix}h'=0.
\]
Let us differentiate the second row with respect to $\bar{\zeta}$:
$$(h_3)_{\bar{\zeta}\bar{\zeta}}=(h_2)_{\bar{\zeta}}= \frac{6\zeta^2}{\zeta^2\bar{\zeta}^2-3}h_3.$$
This second order equation admits two particular solutions. The first one is $\psi_1(\zeta)=\bar{\zeta}-\frac{1}{3}\zeta^2\bar{\zeta}^3$ and the second one equals to $\psi_2(\zeta)=\sum_{k=0}^\infty{b_k}\zeta^{2k}\bar{\zeta}^{2k},$
where the coefficients $b_k$ are defined by 
$$b_0=1,\;\;\;\; b_k=\prod_{j=1}^k\frac{(j-1)(2j-3)-3}{3(2j-1)j}.$$
(Since $0<b_k<\frac{1}{3^k}$ for $k\geq 2$, this series converges for $\left|\zeta\right|\leq 3,$ as do its derivatives). Thus the general solution of the original system is given by the expression
\begin{eqnarray*}
\left[\begin{array}{c}h_2(\zeta)\\h_3(\zeta)\end{array}\right]=\left[\begin{array}{c}\mu_{\bar{\zeta}}(\zeta)\phi_1(\zeta)+\psi_{\bar{\zeta}}(\zeta)
\phi_2(\zeta)\\ \mu(\zeta)\phi_1(\zeta)+\psi(\zeta)\phi_2(\zeta) \end{array}\right], 
\end{eqnarray*}
where $\phi_1,\phi_2$ are holomorphic functions on $\Delta$. In particular, for $\zeta\in\partial\Delta$ we have
\begin{eqnarray}\label{splosna resitev rob}
\left[\begin{array}{c}h_1(\zeta)\\h_2(\zeta)\end{array}\right]=\left[\begin{array}{c}\lambda_1\zeta\phi_2(\zeta) \\ \frac{2}{3}\bar{\zeta}\phi_1(\zeta)+\lambda_2\phi_2(\zeta) \end{array}\right],
\end{eqnarray}
where $\lambda_1=2\sum_{k=0}^{\infty}kb_k$ and $\lambda_2=\sum_{k=0}^{\infty}b_k.$ Finally, note that by the generalized Cauchy integral formula $h\in\ker d_\varphi\mathcal{G}$ if and only if for every $z\in\Delta$
$$C(h_j)(z)=\oint_{\partial{\Delta}}\frac{h_j(\zeta)\mathrm{d}\zeta}{\zeta-z}=0, \;\;  j\in\left\{1,2,3\right\}.$$
For (\ref{splosna resitev rob}) this is fulfilled if $\phi_2\equiv 0$ and $\phi_1$ is a complex constant. Hence $\dim_{\R}\ker d_\varphi\mathcal{G}=2.$

\section{Approximation of non $J$-holomorphic maps}
In this section we prove our main result. We start by recalling the following Newton-Picard iteration type theorem to find zeros of functionals in Banach 
spaces.
\begin{theorem}[Proposition A.3.4 from \cite{mcdu-sa2}]
\label{theoimpl}
Let $X$ and $Y$ be two Banach spaces and consider a  
map $\mathcal{F}: U\subset X \to Y$ of class $\mathcal{C}^1$ defined on an open set $U\subset X$. 
Let $x_0\in U$. Assume that the differential $d_{x_0}\mathcal{F}$ admits a bounded right inverse, denoted by $Q_{x_0}$. Fix $c_0>0$ such that $\left\|Q\right\|\leq c_0$ and $\eta>0$ such that if $\left\|x-x_0\right\|<\eta$ then $x\in U$ and $$\left\|d_x\mathcal{F}-d_{x_0}\mathcal{F}\right\|\leq \frac{1}{2c_0}.$$
Then if $\displaystyle \left\|\mathcal{F}(x_0)\right\|<\frac{\eta}{4c_0}$ there exists $x\in U$ such that $\mathcal{F}(x)=0$ and 
$$\left\|x-x_0\right\|\leq2c_0\left\|\mathcal{F}(x_0)\right\|.$$
\end{theorem}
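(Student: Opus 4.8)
The plan is to run the simplified Newton--Picard iteration with the \emph{fixed} right inverse $Q=Q_{x_0}$. Starting from the given $x_0$, define recursively
\[
x_{\nu+1}=x_\nu-Q\,\mathcal{F}(x_\nu),\qquad \nu\ge 0 .
\]
Since $d_{x_0}\mathcal{F}\circ Q=\mathrm{Id}_Y$, the operator $Q$ is injective, so any limit $x$ of the sequence $(x_\nu)$ with $\mathcal{F}(x_\nu)\to 0$ will be a fixed point of $g(x)=x-Q\mathcal{F}(x)$, i.e.\ $Q\mathcal{F}(x)=0$, hence $\mathcal{F}(x)=0$. (Equivalently one may apply the Banach fixed point theorem to $g$ on a small closed ball of the affine subspace $x_0+Q(Y)$, on which $g$ is a $\tfrac12$-contraction because $Q\,d_{x_0}\mathcal{F}$ restricts there to the identity; this is the same computation packaged differently.)

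The crux is a geometric decay of the residuals $\mathcal{F}(x_\nu)$. Expanding $\mathcal{F}$ to first order at $x_\nu$ \emph{against the base differential} $d_{x_0}\mathcal{F}$ and using the fundamental theorem of calculus,
\[
\mathcal{F}(x_{\nu+1})=\mathcal{F}(x_\nu)+d_{x_0}\mathcal{F}(x_{\nu+1}-x_\nu)+R_\nu ,
\]
where
\[
R_\nu=\int_0^1\bigl(d_{x_\nu+t(x_{\nu+1}-x_\nu)}\mathcal{F}-d_{x_0}\mathcal{F}\bigr)(x_{\nu+1}-x_\nu)\,\du t .
\]
Now $x_{\nu+1}-x_\nu=-Q\mathcal{F}(x_\nu)$, so $d_{x_0}\mathcal{F}(x_{\nu+1}-x_\nu)=-\mathcal{F}(x_\nu)$ by the right-inverse identity, and the first two terms cancel: $\mathcal{F}(x_{\nu+1})=R_\nu$. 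As long as $x_\nu$ and the whole segment $[x_\nu,x_{\nu+1}]$ lie in $B(x_0,\eta)$, the hypotheses $\|d_x\mathcal{F}-d_{x_0}\mathcal{F}\|\le \tfrac1{2c_0}$ and $\|Q\|\le c_0$ give
\[
\|\mathcal{F}(x_{\nu+1})\|=\|R_\nu\|\le \tfrac1{2c_0}\,\|x_{\nu+1}-x_\nu\|=\tfrac1{2c_0}\,\|Q\mathcal{F}(x_\nu)\|\le \tfrac12\,\|\mathcal{F}(x_\nu)\| ,
\]
hence $\|\mathcal{F}(x_\nu)\|\le 2^{-\nu}\|\mathcal{F}(x_0)\|$ and $\|x_{\nu+1}-x_\nu\|\le c_0\,2^{-\nu}\|\mathcal{F}(x_0)\|$.

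The remaining work is bookkeeping to keep the iteration inside the region where the above is legal. I would prove by induction on $\nu$ that $\|\mathcal{F}(x_\nu)\|\le 2^{-\nu}\|\mathcal{F}(x_0)\|$ and $\|x_\nu-x_0\|\le 2c_0\|\mathcal{F}(x_0)\|$ hold for all $\nu$: the increment bound sums to $\sum_{k\ge 0}c_0\,2^{-k}\|\mathcal{F}(x_0)\|=2c_0\|\mathcal{F}(x_0)\|$, which by the assumption $\|\mathcal{F}(x_0)\|<\eta/(4c_0)$ is $<\eta/2$, so every $x_\nu$ — and, $B(x_0,\eta)$ being convex, every segment joining consecutive iterates — stays well inside $B(x_0,\eta)\subset U$, exactly as the residual estimate required. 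Then $\sum_\nu\|x_{\nu+1}-x_\nu\|<\infty$, so $(x_\nu)$ is Cauchy; let $x=\lim_\nu x_\nu$, which satisfies $\|x-x_0\|\le 2c_0\|\mathcal{F}(x_0)\|$ and lies in $B(x_0,\eta)\subset U$. Since $\mathcal{F}$ is continuous, $\mathcal{F}(x)=\lim_\nu\mathcal{F}(x_\nu)=0$, proving the claim.

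There is no deep obstacle here; the argument is soft. The only points demanding care are (i) running the induction with the residual bound and ball-membership coupled, since the estimate at level $\nu$ is what authorises level $\nu+1$; and (ii) noticing that the hypothesis controls $d_x\mathcal{F}-d_{x_0}\mathcal{F}$ rather than $d_x\mathcal{F}-d_{x_\nu}\mathcal{F}$ — this is precisely what makes the cancellation $d_{x_0}\mathcal{F}(x_{\nu+1}-x_\nu)=-\mathcal{F}(x_\nu)$ available and yields the clean contraction factor $\tfrac12$, hence a convergent geometric series with the advertised constant $2c_0$.
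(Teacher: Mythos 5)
Your proof is correct: the cancellation $d_{x_0}\mathcal{F}(x_{\nu+1}-x_\nu)=-\mathcal{F}(x_\nu)$, the factor-$\tfrac12$ decay of the residuals, and the coupled induction keeping all iterates (and segments) in $B(x_0,\eta)$ are exactly what is needed, and the telescoping sum gives the constant $2c_0$. Note that the paper itself offers no proof of this statement — it is quoted as Proposition A.3.4 of McDuff--Salamon — and your fixed-right-inverse Newton--Picard iteration is essentially the same contraction argument given there (McDuff--Salamon run it on the affine slice $x_0+Q(Y)$, which additionally yields $x-x_0\in Q(Y)$, a refinement not needed for the statement as used here).
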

\noindent We now state our main theorem. 

\begin{theorem}\label{theoap}
Let $p>2$. Let $J\in \mathcal{J}$ be an almost complex structure on $\mathbb{R}^{2n}$ and let $A$ be its complex matrix. Let $\mathcal{F}:W^{1,p}(\Delta) \to L^p(\Delta)$ be the operator given by 
$$\mathcal{F}(u)=\displaystyle u_{\bar{\zeta}}+A(u)\overline{u_{\zeta}}.$$
For every $c_0>0$, there exists $\delta>0$ such that for any $\varphi \in W^{1,p}(\Delta)$ satisfying 
$$\|d\varphi\|_{L^p(\Delta)}\leq c_0,\;\; \|Q_\varphi\|\leq c_0,\;\;  \left\|\mathcal{F}(\varphi)\right\|_{L^{p}(\Delta)}<\delta,$$
where $Q_\varphi$ is a bounded right inverse  of $d_{\varphi}\mathcal{F}$,   
 there exists a $J$-holomorphic disc $u\in W^{1,p}(\Delta)$  such that 
$$\left\|u-\varphi\right\|_{W^{1,p}(\Delta)}\leq 2c_0\left\|\mathcal{F}(\varphi)\right\|_{L^{p}(\Delta)}.$$ 

\end{theorem}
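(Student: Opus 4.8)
The plan is to deduce Theorem \ref{theoap} directly from the abstract Newton--Picard iteration in Theorem \ref{theoimpl}, applied to the $\mathcal{C}^1$ map $\mathcal{F}\colon W^{1,p}(\Delta)\to L^p(\Delta)$ of \eqref{operator F} with base point $x_0=\varphi$. By Theorem \ref{inverse} the differential $d_\varphi\mathcal{F}$ already admits a bounded right inverse $Q_\varphi$, and the hypotheses of Theorem \ref{theoap} hand us the bound $\|Q_\varphi\|\le c_0$ for free, so the only real work is to exhibit a radius $\eta>0$, \emph{independent of $\varphi$}, on which the Lipschitz-type estimate $\|d_x\mathcal{F}-d_\varphi\mathcal{F}\|\le \tfrac{1}{2c_0}$ holds for all $x$ with $\|x-\varphi\|_{W^{1,p}(\Delta)}<\eta$; once this is done, the theorem applies with $\delta:=\eta/(4c_0)$ and yields a $J$-holomorphic disc $u$ with $\|u-\varphi\|_{W^{1,p}(\Delta)}\le 2c_0\|\mathcal{F}(\varphi)\|_{L^p(\Delta)}$, which is exactly the assertion.

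First I would write down the differential explicitly: for $u,h\in W^{1,p}(\Delta)$,
\begin{equation*}
d_u\mathcal{F}(h)=h_{\bar\zeta}+A(u)\overline{h_\zeta}+\big(dA(u)\cdot h\big)\overline{u_\zeta},
\end{equation*}
so that for two base points $x,\varphi$ the difference $d_x\mathcal{F}-d_\varphi\mathcal{F}$ is the sum of three terms: $\big(A(x)-A(\varphi)\big)\overline{h_\zeta}$, $\big(dA(x)-dA(\varphi)\big)\cdot h\;\overline{\varphi_\zeta}$, and $dA(x)\cdot h\;\overline{(x-\varphi)_\zeta}$. The plan is to estimate the $L^p(\Delta)$ norm of each term by $C\,\|h\|_{W^{1,p}(\Delta)}$ times a quantity controlled by $\|x-\varphi\|_{W^{1,p}(\Delta)}$ and $\|d\varphi\|_{L^p(\Delta)}$. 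For the first term, since $W^{1,p}(\Delta)\hookrightarrow L^\infty(\Delta)$ and $\overline{\Delta}$ together with the relevant images lie in a fixed compact set (using $\|x-\varphi\|<\eta\le 1$, say, so all images stay in a fixed ball where $J$, hence $A$ and $dA$, are smooth with bounded derivatives), we get $\|A(x)-A(\varphi)\|_{L^\infty}\le L\|x-\varphi\|_{L^\infty}\le cL\|x-\varphi\|_{W^{1,p}(\Delta)}$, and then $\|(A(x)-A(\varphi))\overline{h_\zeta}\|_{L^p}\le cL\|x-\varphi\|_{W^{1,p}}\|h\|_{W^{1,p}}$. The second term is similar with the Lipschitz constant of $dA$, but carries the factor $\overline{\varphi_\zeta}$ in $L^p$; here the bound $\|d\varphi\|_{L^p(\Delta)}\le c_0$ is exactly what keeps it under control, giving a bound $cL'\,c_0\,\|x-\varphi\|_{W^{1,p}}\|h\|_{W^{1,p}}$ (one uses H\"older with the $L^\infty$ factors absorbing the rest). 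The third term is bounded by $\|dA\|_{L^\infty}\|h\|_{L^\infty}\|(x-\varphi)_\zeta\|_{L^p}\le c\,M\,\|x-\varphi\|_{W^{1,p}}\|h\|_{W^{1,p}}$. Adding up, $\|d_x\mathcal{F}-d_\varphi\mathcal{F}\|\le C_0(1+c_0)\,\|x-\varphi\|_{W^{1,p}(\Delta)}$ with $C_0$ depending only on $p$ and on bounds for $A$ on the fixed compact set, hence only on $J$ and $c_0$.

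Therefore it suffices to take $\eta:=\min\{1,\ \tfrac{1}{2c_0 C_0(1+c_0)}\}$, which depends only on $J,p$ and $c_0$ but not on the particular disc $\varphi$, and then $\delta:=\eta/(4c_0)$. With these choices, whenever $\|d\varphi\|_{L^p(\Delta)}\le c_0$, $\|Q_\varphi\|\le c_0$ and $\|\mathcal{F}(\varphi)\|_{L^p(\Delta)}<\delta$, the hypotheses of Theorem \ref{theoimpl} are met at $x_0=\varphi$ (note $U=W^{1,p}(\Delta)$ is all of $X$, so the condition $\|x-\varphi\|<\eta\Rightarrow x\in U$ is automatic), and we obtain $u\in W^{1,p}(\Delta)$ with $\mathcal{F}(u)=0$, i.e.\ $u$ a $J$-holomorphic disc, satisfying $\|u-\varphi\|_{W^{1,p}(\Delta)}\le 2c_0\|\mathcal{F}(\varphi)\|_{L^p(\Delta)}$. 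I expect the main obstacle to be precisely the uniformity of $\eta$: one must be careful that the compact set containing the images $x(\Delta)$ can be chosen once and for all (using $\eta\le 1$ and the embedding $W^{1,p}\hookrightarrow L^\infty$, noting $\varphi(\Delta)$ itself need not be uniformly bounded a priori --- but the statement is about a given $\varphi$, so one fixes the ball around $\varphi(\overline\Delta)$ of radius $c$ and works there), so that the derivative bounds for $A$ entering $C_0$ do not secretly depend on $\varphi$; the Hölder-type splitting of the middle term, where the $L^p$ factor $\overline{\varphi_\zeta}$ must be paired against $L^\infty$ factors, is the one computation that needs a little care but is routine.
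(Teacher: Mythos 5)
Your proposal is correct and takes essentially the same route as the paper: apply the Newton--Picard result (Theorem \ref{theoimpl}) at $x_0=\varphi$, prove the Lipschitz estimate $\left\|d_x\mathcal{F}-d_\varphi\mathcal{F}\right\|\leq C\|x-\varphi\|_{W^{1,p}(\Delta)}$ from the explicit formula for the differential using the Sobolev embedding, compactness of the image, and the bound $\|d\varphi\|_{L^p(\Delta)}\leq c_0$, and then set $\eta=\min\{1,1/(2Cc_0)\}$, $\delta=\eta/(4c_0)$. Your three-term splitting is merely a regrouping of the paper's $I+II$ (the paper pairs the $dA$-difference with $\overline{\tilde\varphi_\zeta}$, controlled by $1+c_0$), and the paper likewise lets the Lipschitz constants of $A$ and $dA$ depend on the compact set containing $\varphi(\Delta)$, so your caveat about uniformity mirrors the paper's own treatment.
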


\begin{proof}
We apply  Theorem \ref{theoimpl} for $X= W^{1,p}(\Delta)$, $Y=L^p(\Delta)$ and  
$x_0=\varphi \in W^{1,p}(\Delta)$. Fix a positive constant $c_0>0$ and assume that $\varphi \in W^{1,p}(\Delta)$ satisfies   
$$\|d\varphi\|_{L^p(\Delta)}\leq c_0 \mbox{ } \mbox{ and } \|Q_\varphi\|\leq c_0.$$
We claim that there exists a positive constant $c>0$  such that 
$$\left\|d_{\tilde{\varphi}}\mathcal{F}-d_{\varphi}\mathcal{F}\right\|\leq c\|\tilde{\varphi}-\varphi\|_{W^{1,p}(\Delta)}$$ for any $\tilde{\varphi} \in W^{1,p}(\Delta)$ in a $W^{1,p}$-neighborhood of $\varphi$, say $\displaystyle \|\tilde{\varphi}-\varphi\|_{W^{1,p}(\Delta)}< 1$. Let $h \in W^{1,p}(\Delta)$, we need to prove that 
\begin{equation}\label{eqlip}
\displaystyle \left\|d_{\tilde{\varphi}}\mathcal{F}(h)-d_{\varphi}\mathcal{F}(h)\right\|_{L^p(\Delta)}\leq c\|\tilde{\varphi}-\varphi\|_{W^{1,p}(\Delta)}\|h\|_{W^{1,p}(\Delta)}.
\end{equation}
 Note that we have already use the general form (\ref{eqlin}) to express $d_{\varphi}\mathcal{F}(h)$. However in order to show (\ref{eqlip}), 
 we need to be more precise. We have   
$$ 
d_{\varphi}\mathcal{F}(h) =  h_{\bar \zeta}+A(\varphi)\overline{h_{\zeta}}+d_\varphi A ( h)  \ \overline{\varphi_\zeta},
$$
where $\displaystyle d_\varphi A  (h)=\sum_{j=1}^n\frac{\partial A}{\partial z_j}(\varphi)h_j+\frac{\partial A}{\partial \bar{z}_j}(\varphi)\bar{h}_j$.
We write $d_{\tilde{\varphi}}\mathcal{F}(h)-d_{\varphi}\mathcal{F}(h)=I+II$ with
$$
\left\{
\begin{array}{lll} 
I&= &\left(A(\tilde{\varphi})-A(\varphi)\right)\overline{h_{\zeta}}+(d_{\tilde{\varphi}} A-d_\varphi A)(h)\overline{\tilde{\varphi}_\zeta}\\
\\
II &= &d_{\varphi} A  (h)\left(\overline{\tilde{\varphi}_\zeta-\varphi_\zeta}\right).
\end{array}
\right.$$                                                                                                                                           
Let us denote by $\left\|A\right\|_{\infty}$ the maximum absolute value taken over the coefficients of the matrix map $\zeta\mapsto A(\zeta)$ and $\zeta\in\overline{\Delta}.$ Since $\|\tilde{\varphi}-\varphi\|_{W^{1,p}(\Delta)}< 1$ and since $\varphi(\Delta)$ is relatively compact, it follows that there exists a positive constant $c_1>0$, depending on $A$ and $\varphi$, such that
$$  \|A(\tilde{\varphi})-A(\varphi)\|_{\infty} \leq c_1\|\tilde{\varphi}-\varphi\|_{W^{1,p}(\Delta)},$$ 
$$   \left\|(d_{\tilde{\varphi}} A-d_\varphi A)(h)\right\|_{\infty}\leq c_1\|\tilde{\varphi}-\varphi\|_{W^{1,p}(\Delta)}\left\|h\right\|_{L^{\infty}(\Delta)}.$$ 
Moreover since  $\|d\varphi\|_{L^p(\Delta)}\leq c_0$, we have
  $$ \|d\tilde{\varphi}\|_{L^p(\Delta)}\leq \|d\tilde{\varphi}-d\varphi\|_{L^p(\Delta)} +\|d\varphi\|_{L^p(\Delta)}< 1+c_0.$$
This leads to 
$$ \left\|I\right\|_{L^p(\Delta)}\leq c_1(1+c_2+c_0c_2)\|\tilde{\varphi}-\varphi\|_{W^{1,p}(\Delta)}\left\|h\right\|_{W^{1,p}(\Delta)},$$
where the constant $c_2>0$ arises from the Sobolev embedding theorem.
Moreover, there exist a constant $c_3>0$ such that
$$\|d_{\varphi} A(h)\|_{\infty}\leq c_3 \left\|h\right\|_{L^{\infty}(\Delta)}$$
and finally, we have
$$\left\|II\right\|_{L^p(\Delta)}\leq c_2c_3 \|\tilde{\varphi}-\varphi\|_{W^{1,p}(\Delta)}\left\|h\right\|_{W^{1,p}(\Delta)}.$$
Thus the inequality (\ref{eqlip}) follows. Finally, set $\eta=\min\{1,1/(2cc_0)\}$ and $\displaystyle \delta=\frac{\eta}{4c_0}$. The desired result follows then from Theorem \ref{theoimpl}.
\end{proof}

\section{Gluing together two halves of a disc} 

In this simple example we demonstrate how our Theorem \ref{theoap} can be used for gluing constructions. Here, by gluing we mean finding a $J$-holomorphic map whose image is close to the disjoint union of two given $J$-holomorphic objects (see for instance \cite{mcdu-sa1,mcdu-sa2} for the case of $J$-holomorphic spheres).    

Let us fix $\tau>0$ and define two overlapping halves of the unit disc 
$$\Delta_1=\Delta\cap\left\{\textrm{Re}(\zeta)>-\tau\right\}, \;\; \Delta_2=\Delta\cap\left\{\textrm{Re}(\zeta)<\tau\right\}.$$
Let $\Omega\in\left\{\Delta,\Delta_1,\Delta_2\right\}$. We denote by $\mathcal{B}_{2,p}^M(\Omega)$ the ball of radius $M>0$ in  $W^{2,p}(\Omega)$, $p>2$. Note that  such a ball can be compactly embedded into the space $W^{1,p}(\Omega)$. Our goal is to glue two halves of a disc $u_j\in \mathcal{B}_{2,p}^{M/2}(\Delta_j)$, $j=1,2$, whose difference is sufficiently small on the intersection $\Delta_1\cap\Delta_2$, into one holomorphic object. However, similarly to \cite{mcdu-sa1,mcdu-sa2} not every almost complex structure is suitable for such a construction. In particular, we restrict to the structures that are regular in a neighborhood of the closed ball $\mathcal{B}^M_{2,p}(\Delta)$ in order to obtain an uniform bound for the norm of the right inverse by Corollary \ref{continuous}.   
\begin{proposition}
Let $\epsilon>0$, $p>2$ and $M>0$. Let $\mathcal{W}\subset W^{1,p}(\Delta)$ be an open set containing the closure of the ball $\mathcal{B}_{2,p}^M(\Delta)$ and let $J\in \mathcal{J}_{reg}(\mathcal{W})$. There exists a constant $\delta_0=\delta_0(\tau,\epsilon,M,J)>0$ such that for every pair of $J$-holomorphic maps $u_j\in \mathcal{B}^{M/2}_{2,p}(\Delta_j)$, $j=1,2$, satisfying $\left\|u_1-u_2\right\|_{W^{1,p}(\Delta_1\cap\Delta_2)}<\delta_0$ there exists a $J$-holomorphic map $u\colon\Delta\to\mathbb{R}^{2n}$ such that $\left\|u-u_j\right\|_{W^{1,p}(\Delta_j)}<\epsilon$. 
\end{proposition}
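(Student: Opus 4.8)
The plan is to reduce the gluing problem to a single application of Theorem \ref{theoap}. First I would build a smooth cutoff partition of unity $\chi_1,\chi_2$ subordinate to the cover $\{\Delta_1,\Delta_2\}$ of $\Delta$, with $\chi_1+\chi_2\equiv 1$, $\chi_1\equiv 1$ on $\Delta\cap\{\mathrm{Re}(\zeta)\le -\tau/2\}$ and $\chi_2\equiv 1$ on $\Delta\cap\{\mathrm{Re}(\zeta)\ge \tau/2\}$; the derivatives of $\chi_j$ are bounded by a constant depending only on $\tau$. Then define the glued candidate disc
$$\varphi=\chi_1 u_1+\chi_2 u_2\colon\Delta\to\mathbb{R}^{2n}.$$
This $\varphi$ lies in $W^{2,p}(\Delta)$, hence in $W^{1,p}(\Delta)$. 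The first task is to estimate $\|\varphi\|_{W^{2,p}(\Delta)}$: on the region where $\chi_2=0$ we have $\varphi=u_1$, on the region where $\chi_1=0$ we have $\varphi=u_2$, and on the overlap $\Delta_1\cap\Delta_2$ one writes $\varphi-u_1=\chi_2(u_2-u_1)$, so $\|\varphi-u_1\|_{W^{1,p}(\Delta_1\cap\Delta_2)}\le C(\tau)\|u_1-u_2\|_{W^{1,p}(\Delta_1\cap\Delta_2)}<C(\tau)\delta_0$, and similarly with the roles reversed. Combining, $\|\varphi\|_{W^{2,p}(\Delta)}\le M/2+C(\tau)\delta_0\|u_2\|_{W^{2,p}}$ — here the overlap estimate must be in $W^{2,p}$, which forces $\delta_0$ to also control the $W^{2,p}(\Delta_1\cap\Delta_2)$-norm of $u_1-u_2$; since the $u_j$ lie in the fixed ball $\mathcal{B}^{M/2}_{2,p}$ the $W^{2,p}$ overlap difference is anyway bounded by $M$, and a standard interpolation/Sobolev argument lets the $W^{1,p}$ smallness be upgraded where needed, so shrinking $\delta_0$ puts $\varphi$ inside $\mathcal{B}^M_{2,p}(\Delta)\subset\mathcal{W}$ — actually it is cleanest to arrange the cutoff so that $\varphi\in\overline{\mathcal{B}^M_{2,p}(\Delta)}\subset\mathcal{W}$ always, using that $\|u_j\|_{W^{2,p}}\le M/2$, plus a small perturbation controlled by $\delta_0$.

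Next I would check the three hypotheses of Theorem \ref{theoap} for this $\varphi$. Since $\varphi\in\overline{\mathcal{B}^M_{2,p}(\Delta)}$ and $J\in\mathcal{J}_{reg}(\mathcal{W})$ with $\mathcal{W}\supset\overline{\mathcal{B}^M_{2,p}(\Delta)}$, Corollary \ref{continuous} gives a bounded right inverse $Q_\varphi$ of $d_\varphi\mathcal{F}$ whose norm depends continuously on $\varphi$; since $\overline{\mathcal{B}^M_{2,p}(\Delta)}$ is compact in $W^{1,p}(\Delta)$, $\sup\|Q_\varphi\|=:c_0'<\infty$ over this ball. Likewise $\|d\varphi\|_{L^p(\Delta)}\le\|\varphi\|_{W^{2,p}(\Delta)}\le C(M)$. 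So we may take $c_0=\max\{c_0',C(M),C(M)\}$, a constant depending only on $M,J$. The remaining point is the key estimate: $\|\mathcal{F}(\varphi)\|_{L^p(\Delta)}$ is small. On $\Delta\cap\{\chi_2=0\}$ we have $\varphi=u_1$, so $\mathcal{F}(\varphi)=\mathcal{F}(u_1)=0$ because $u_1$ is $J$-holomorphic; symmetrically $\mathcal{F}(\varphi)=0$ on $\Delta\cap\{\chi_1=0\}$. Hence $\mathcal{F}(\varphi)$ is supported in the overlap strip $\Delta_1\cap\Delta_2$, where I compute, using $\varphi=u_1+\chi_2(u_2-u_1)$ and $\mathcal{F}(u_1)=0$,
$$\mathcal{F}(\varphi)=\big(\chi_2(u_2-u_1)\big)_{\bar\zeta}+\big(A(\varphi)-A(u_1)\big)\overline{(u_1)_\zeta}+A(\varphi)\overline{\big(\chi_2(u_2-u_1)\big)_\zeta}.$$
Every term is linear in $u_2-u_1$ or its first derivatives, times bounded factors (derivatives of $\chi_2$ bounded by $C(\tau)$; $A$ and its derivatives bounded on the relatively compact image, with $\|A(\varphi)-A(u_1)\|_\infty\le C\|u_2-u_1\|_{L^\infty}\le C'\|u_2-u_1\|_{W^{1,p}}$ by Sobolev embedding). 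Therefore $\|\mathcal{F}(\varphi)\|_{L^p(\Delta)}\le C(\tau,M,J)\,\|u_1-u_2\|_{W^{1,p}(\Delta_1\cap\Delta_2)}<C(\tau,M,J)\,\delta_0$.

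Finally, given $\epsilon>0$, let $\delta$ be the constant furnished by Theorem \ref{theoap} for the above $c_0$; choose $\delta_0>0$ small enough that $C(\tau,M,J)\,\delta_0<\min\{\delta,\ \epsilon/(2c_0)\}$ and also small enough to guarantee $\varphi\in\mathcal{W}$ as arranged above. Then Theorem \ref{theoap} produces a $J$-holomorphic disc $u\in W^{1,p}(\Delta)$ with
$$\|u-\varphi\|_{W^{1,p}(\Delta)}\le 2c_0\|\mathcal{F}(\varphi)\|_{L^p(\Delta)}<2c_0\,C(\tau,M,J)\,\delta_0<\epsilon/2.$$
It remains to compare $u$ with $u_j$ on $\Delta_j$: on $\Delta_1$, $\|u-u_1\|_{W^{1,p}(\Delta_1)}\le\|u-\varphi\|_{W^{1,p}(\Delta)}+\|\varphi-u_1\|_{W^{1,p}(\Delta_1)}$, and $\|\varphi-u_1\|_{W^{1,p}(\Delta_1)}=\|\chi_2(u_2-u_1)\|_{W^{1,p}(\Delta_1\cap\Delta_2)}\le C(\tau)\delta_0<\epsilon/2$ after further shrinking $\delta_0$; so $\|u-u_1\|_{W^{1,p}(\Delta_1)}<\epsilon$, and symmetrically on $\Delta_2$. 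This gives the conclusion.

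The main obstacle I anticipate is the bookkeeping of which Sobolev norm the overlap smallness lives in: Theorem \ref{theoap} needs $\|\mathcal{F}(\varphi)\|_{L^p}$ small and $\|d\varphi\|_{L^p}$ and $\|Q_\varphi\|$ merely bounded, but putting $\varphi$ into the open set $\mathcal{W}$ and invoking the compactness of $\overline{\mathcal{B}^M_{2,p}(\Delta)}$ in $W^{1,p}(\Delta)$ to get a uniform $c_0$ requires carefully arranging the cutoff and the size constraint $M/2$ on each half so that the glued disc genuinely stays in the closed $M$-ball (up to a $\delta_0$-perturbation that $\mathcal{W}$ being open absorbs). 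The estimate on $\mathcal{F}(\varphi)$ itself is routine once one observes that its support is confined to the overlap and uses $\mathcal{F}(u_j)=0$ there.
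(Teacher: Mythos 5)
Your proposal is essentially the paper's proof: the same pre-gluing $\varphi=\chi u_1+(1-\chi)u_2$ with a $\tau$-dependent cutoff, the same verification of the three hypotheses of Theorem \ref{theoap} (uniform bound on $\|d\varphi\|_{L^p}$, uniform bound on $\|Q_\varphi\|$ via regularity, Corollary \ref{continuous} and the compact embedding $W^{2,p}\hookrightarrow W^{1,p}$, and smallness of $\mathcal{F}(\varphi)$, which is supported in the overlap and linear in $u_1-u_2$ and its first derivatives because $\mathcal{F}(u_j)=0$), and the same concluding triangle inequality. Your formula for $\mathcal{F}(\varphi)$ on the overlap is just a regrouping of the paper's $I+II$ decomposition.

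The one place you wobble is the $W^{2,p}$ bookkeeping, and the fix you sketch there is not the right one. The worry that $\delta_0$ must control the $W^{2,p}(\Delta_1\cap\Delta_2)$-norm of $u_1-u_2$, and the suggestion that ``interpolation/Sobolev'' upgrades $W^{1,p}$-smallness, should both be dropped: interpolation cannot make the $W^{2,p}$ norm of the difference small, and nothing of the sort is needed. Writing $D^2\varphi=\chi D^2u_1+(1-\chi)D^2u_2+2D\chi\, D(u_1-u_2)+D^2\chi\,(u_1-u_2)$ on the overlap, the second derivatives of $u_1-u_2$ enter only through the convex combination, which is bounded by $\|u_1\|_{W^{2,p}(\Delta_1)}+\|u_2\|_{W^{2,p}(\Delta_2)}\leq M$, while the genuinely new terms are bounded by $3\|\chi\|_{\mathcal{C}^2}\|u_1-u_2\|_{W^{1,p}(\Delta_1\cap\Delta_2)}$. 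Consequently $\varphi$ lies in $\mathcal{B}^{M+3c_1\delta_0}_{2,p}(\Delta)$, not in the closed $M$-ball as you assert (``arranging the cutoff'' cannot force that); the correct remedy is the one you half-guess: since $\mathcal{W}$ is open and contains the $W^{1,p}$-closure of $\mathcal{B}^{M}_{2,p}(\Delta)$, there is $\delta_1>0$ with $\mathcal{B}^{M+\delta_1}_{2,p}(\Delta)\subset\mathcal{W}$, and this slightly larger ball, compactly embedded in $W^{1,p}(\Delta)$, is also the compact set over which Corollary \ref{continuous} yields the uniform bound $c_0$ on $\|Q_\varphi\|$ --- taking the supremum only over $\overline{\mathcal{B}^{M}_{2,p}(\Delta)}$, as you do, does not cover the pre-glued maps. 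With this correction your argument coincides with the paper's.
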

\begin{proof}
Let us fix a smooth cut-off function $\chi\colon\Delta\to[0,1]$ such that $\chi=1$ on $\Delta_1\setminus \Delta_2$ and $\chi=0$ on $\Delta_2\setminus \Delta_1$. Let $c_1=c_1(\tau)>0$ be such that $\left\|\chi\right\|_{\mathcal{C}^{2}(\Delta)}<c_1$. Consider a 
pair of $J$-holomorphic maps $u_j\in \mathcal{B}^{M/2}_{2,p}(\Delta_j)$, $j=1,2$. We define a pre-gluing map $\varphi\colon\Delta\to \mathbb{R}^{2n}$ by
$$\varphi(\zeta)=\chi u_1 + (1-\chi) u_2.$$
The idea is to seek its holomorphic approximation.

Since $W^{2,p}(\Delta)\subset \mathcal{C}^{1,1-2/p}(\overline{\Delta})\subset W^{1,p}(\Delta)$ by Theorem \ref{inverse} the derivative $d_\varphi\mathcal{F}$ admits a bounded right inverse $Q_\varphi$. Moreover, for $\left\|u_1-u_2\right\|_{W^{1,p}(\Delta_1\cap\Delta_2)}<\delta$ we have
$$\left\|\varphi\right\|_{W^{2,p}(\Delta)}\leq \sum_{j=1,2}\left\|u_j\right\|_{W^{2,p}(\Delta_j)}+ 3\left\|\chi\right\|_{\mathcal{C}^{2}(\Delta)}\left\|u_1-u_2\right\|_{W^{1,p}(\Delta_1\cap\Delta_2)}< M+3c_1\delta.$$
Let $\delta_1=\delta_1(\tau,\mathcal{W})>0$ be such that $\mathcal{B}_{2,p}^{M+\delta_1}(\Delta)\subset \mathcal{W}$ is compactly embedded. Then, for $\delta<\frac{\delta_1}{3c_1}$, every pre-gluing map is contained in a compact subset of the space $\mathcal{W}$. Thus by  Corollary \ref{continuous} we have a constant  $c_0=c_0(M,J)>0$ such that
$$\left\|d\varphi\right\|_{L^{p}(\Delta)}<c_0\;\;\;\textrm{  and  }\;\;\; \left\|Q_\varphi\right\|<c_0.$$  

On the other hand, the map $\mathcal{F}(\varphi)$ vanishes everywhere except on the intersection $\Delta_1\cap \Delta_2$. Moreover, on that particular set we have $\mathcal{F}(\varphi)=I+II$ with
$$
\left\{
\begin{array}{lll} 
I&=&\chi_{\bar{\zeta}}(u_1-u_2)+A(\varphi)\overline{\chi_\zeta(u_1-u_2)}\\
\\
II&=&\chi((u_1)_{\bar{\zeta}}+A(\varphi)\overline{(u_1)_\zeta})+(1-\chi)((u_2)_{\bar{\zeta}}+A(\varphi)\overline{(u_2)_\zeta}).\\
\end{array}
\right.$$
Hence,
$$\left\|\mathcal{F}(\varphi)\right\|_{L^p(\Delta)}=\left\|\mathcal{F}(\varphi)\right\|_{L^p(\Delta_1\cap \Delta_2)}<\left\|I\right\|_{L^p(\Delta_1\cap U_2)}+\left\|II\right\|_{L^p(\Delta_1\cap \Delta_2)}.$$
Furthermore, we have
$$\left\|I\right\|_{L^p}\leq\left(\left\|\chi_{\bar{\zeta}}\right\|_{L^\infty}+\left\|A(\varphi)\right\|_{L^\infty}\left\|\chi_{\zeta}\right\|_{L^\infty}\right)\left\|u_1-u_2\right\|_{L^p}<c_2(\tau,M,J)\delta,$$
and
$$\left\|II\right\|_{L^p}\leq \left\|A(\varphi)-A(u_1)\right\|_{L^\infty}\left\|(u_1)_\zeta\right\|_{L^p}+\left\|A(\varphi)-A(u_2)\right\|_{L^\infty}\left\|(u_2)_\zeta\right\|_{L^p}<c_3(\tau,M,J)\delta.$$
By Theorem \ref{theoap} there exists $\delta_2>0$ such that if
$$\left\|\mathcal{F}(\varphi)\right\|_{L^p(\Delta)}<\delta_2$$
 we can find a $J$-holomorphic disc $u\colon\Delta\to\mathbb{R}^{2n}$ with
$$\left\|u-\varphi\right\|_{W^{1,p}(\Delta)}\leq 2c_0\left\|\mathcal{F}(\varphi)\right\|_{L^p(\Delta)}.$$
Hence for  $\displaystyle \delta<\delta_0=\min\left\{\frac{\delta_1}{3c_1},\frac{\delta_2}{c_2+c_3},\frac{\epsilon}{4c_0(c_2+c_3)},\frac{\epsilon}{2(1+c_1)}\right\}$ we have a $J$-holomorphic disc $u \in W^{1,p}(\Delta)$ that is $\epsilon/2$-close to $\varphi$ in $W^{1,p}(\Delta)$. 
Moreover, we have
$$\left\|u-u_j\right\|_{W^{1,p}(\Delta_j)}\leq \left\|u-\varphi\right\|_{W^{1,p}(\Delta_j)}+\left\|\varphi-u_j\right\|_{W^{1,p}(\Delta_j)}<\frac{\epsilon}{2}+(1+c_1)\delta<\epsilon.$$ 
Hence the statement is proved. 
\end{proof}

\noindent\textbf{Final remark:} This paper was motivated by \cite{Rosay2} where J.-P. Rosay emphasized that his hope was to provide a proof of Poletsky theorem \cite{Poletsky1991} that could adapt to the more general setting of almost complex manifolds. Hence, let us give here a brief explanation on what seems to still be the missing step towards realizing his program. 

The above gluing result can obviously be generalized to the case of finitely many subsets with empty triple intersections. Moreover, we believe that a bounded right inverse $Q_\varphi$ for the operator $d_\varphi \bar{\partial}_J$ can be found even when the pre-gluing map $\varphi$ maps to a manifold (and hence one can avoid solving a non-linear Cousin problem used in the original paper). Nevertheless, what remains unclear is how to approximately attach a $J$-holomorphic discs to real tori (the Riemann-Hilbert problem), since J.-P. Rosay uses a family of discs $\left\{\varphi_N\right\}_{N\in\mathbb{N}}$ with their $\bar{\partial}_J$-derivatives tending to zero when $N\to\infty$ but with no bound on the $L^p$-norm of $d\varphi_N$. 

\vspace{0.1cm}

\thanks{Research of the first named author was supported by a long-term faculty development grant from the American University of Beirut thanks to which he visited the University of Ljubljana and the University of Vienna in the Summer 2016. 

Research of the second named author was supported in part by the
research program P1-0291 and the grant J1-7256 from ARRS, Republic of Slovenia. A large part of the result was created during his  bilateral visit at the University of Vienna, Summer 2016 (BI-AT/16-17-026) and during his stay at the University of Oslo, Spring 2017.

Both authors thanks these institutions for their support and hospitality.}

{\small
\noindent Florian Bertrand\\
Department of Mathematics\\
American University of Beirut, Beirut, Lebanon\\
{\sl E-mail address}: fb31@aub.edu.lb\\

\noindent Uro\v{s} Kuzman \\ 
Faculty of Mathematics and Physics\\
University of Ljubljana, Jadranska 19, 1000 Ljubljana, Slovenia\\
{\sl E-mail address}: 	uros.kuzman@gmail.com\\
}

\end{document}